\pdfoutput=1
\documentclass[11pt,reqno]{amsart}

\usepackage[a4paper,margin=1.15in]{geometry}
\usepackage[T1]{fontenc}
\usepackage{lmodern}
\usepackage{microtype}
\usepackage{mathtools,amssymb}
\usepackage[numbers,sort&compress]{natbib}
\usepackage{xcolor}
\usepackage[colorlinks=true,linkcolor=blue!50!black,citecolor=blue!50!black,urlcolor=blue!50!black]{hyperref}

\newcommand{\ECT}{\mathrm{ECT}}
\newcommand{\Prob}{\mathbb{P}}

\newcommand{\R}{\mathbb{R}}
\newcommand{\Sphere}{\mathbb{S}}

\newcommand{\norm}[1]{\left\lVert #1 \right\rVert}
\newcommand{\card}[1]{\left\lvert #1 \right\rvert}

\theoremstyle{plain}
\newtheorem{theorem}{Theorem}[section]

\newtheorem{proposition}[theorem]{Proposition}
\newtheorem{corollary}[theorem]{Corollary}
\theoremstyle{remark}
\newtheorem{remark}[theorem]{Remark}

\begin{document}

\title[Probabilistic ECT Stability Transfer]{Probabilistic Stability Transfer for\\One-Dimensional Euler Characteristic Transforms}

\author{Vanja Stojanovi\'c}
\address{Faculty of Mathematics and Physics, University of Ljubljana, Jadranska ulica 21, SI-1000 Ljubljana, Slovenia}
\email{vs66277@student.uni-lj.si}

\subjclass[2020]{Primary 55N31, 60D05; Secondary 62R40, 68U10}
\keywords{Euler characteristic transform, topological data analysis, stochastic topology, stability, random perturbations}

\begin{abstract}
    The Euler characteristic transform (ECT) is a computable topological signature for shapes embedded in Euclidean space. Existing ECT stability results control changes in the transform through deterministic geometric perturbation bounds, but many data-generating mechanisms produce perturbations that are random and unbounded in the worst case. In such settings a uniform deterministic bound may be unavailable or too pessimistic. This paper formulates a probabilistic stability-transfer principle for one-dimensional ECT stability: a deterministic stability estimate is combined with a tail bound for the induced geometric error to obtain a high-probability ECT error bound. We instantiate this principle using the stability theorem of Marsh and Beers for embeddings of finite one-dimensional CW complexes. The resulting consequences cover ambient displacement fields conditionally on admissibility and supremum-tail estimates, and finite landmark perturbations with reconstruction. For landmark perturbations, bounded, sub-Gaussian, sub-exponential, and polynomial-tail noise assumptions give explicit high-probability ECT stability bounds. The framework isolates a geometric stochastic component that arises in shape analysis pipelines across biological morphology, medical imaging, molecular structure analysis, and remote sensing.
\end{abstract}
\maketitle

\section{Introduction}

The Euler characteristic transform (ECT) is a topological signature that summarizes a compact shape by recording Euler characteristics of directional sublevel sets. It is computationally attractive and is known to be injective on broad classes of embedded shapes; see \cite{Munch02012025} for an accessible introduction. The ECT is closely related to the persistent homology transform introduced by Turner, Mukherjee, and Boyer \cite{TurnerMukherjeeBoyer2014PHT}, and its injectivity can be understood through Euler calculus and Euler integral transforms \cite{GhristLevangerMai2018EulerIntegral,CurryMukherjeeTurner2022Directions}. Recent work has also established stability and statistical inference results for the ECT in the setting of compact one-dimensional shapes, with curvature playing a central role in the stability estimate \cite{marsh-lewis-beers-davis-1d-ect}.

The practical motivation is broader than any single imaging modality. ECT and closely related Euler characteristic summaries have been used for biological morphology, including barley seed shape analysis \cite{AmezquitaQuigleyOpheldersLandisKoenigMunchChitwood2022Barley}, medical imaging through the SECT analysis of glioblastoma tumors \cite{CrawfordMonodChenMukherjeeRabadan2020SECT}, and protein structure or molecular-dynamics analysis through Euler-characteristic-based topological summaries \cite{TangDaSilvaKirveslahtiSkeensFengSudijonoYangMukherjeeRubensteinCrawford2022Protein}. More generally, Euler characteristic profiles and related transforms have been studied as efficient topological descriptors for machine-learning tasks \cite{HacquardLebovici2024EulerTools}, and recent digital ECT tools emphasize shape comparison without relying on landmarks or diffeomorphism assumptions \cite{KirveslahtiWang2025DigitalECT}. These applications share a common issue: the object whose topology is analyzed is often produced from noisy measurements, segmentation, reconstruction, or registration.

The goal of this paper is to isolate a general probabilistic question that appears whenever ECTs are computed from noisy one-dimensional geometric data. Deterministic stability theorems typically control the ECT error by a deterministic geometric perturbation size. This is the correct language for worst-case analysis, but it is often too strong for data generated by random mechanisms. Gaussian, sub-exponential, and heavy-tailed perturbations are not uniformly bounded. A deterministic worst-case radius can therefore be infinite or practically useless, even when large perturbations have small probability.

The central point is simple: if a deterministic theorem bounds the ECT error by a function of a geometric error radius, then any probabilistic tail bound for that radius gives a high-probability ECT stability bound. The contribution of this paper is to formulate this transfer carefully, separate the deterministic and probabilistic assumptions, and instantiate the framework in a one-dimensional setting using the stability theorem of Marsh and Beers. The resulting argument is modular: the stochastic part changes with the perturbation model, while the deterministic ECT stability input can be imported as a theorem.

One long-term application direction is synthetic aperture radar (SAR). SAR imagery is commonly modeled as being corrupted by multiplicative speckle noise, and homomorphic despeckling methods use a logarithmic transform to convert multiplicative noise into additive noise \cite{Choi2019Speckle,Garg2023DESPECKLING,Singh2018A}. However, raw SAR images and range-Doppler maps raise additional modeling questions: the input may be a grayscale image, a cubical complex, an excursion-set filtration, or an extracted geometric object. This paper does not prove an ECT theorem for SAR images directly. Instead, it develops a geometric probabilistic stability component for one-dimensional extracted objects; for SAR signal-processing background see \cite{alma991000938109707381}.

\paragraph{Contributions.}
The paper is structured around the following contributions:
\begin{itemize}
    \item an abstract probabilistic transfer theorem converting deterministic ECT stability into high-probability ECT stability;
    \item a one-dimensional instantiation based on the Marsh--Beers deterministic stability theorem;
    \item ambient displacement and finite landmark perturbation models for one-dimensional embedded shapes;
    \item explicit landmark-noise corollaries for bounded, sub-Gaussian, sub-exponential, and polynomial-tail regimes;
    \item a precise boundary between this geometric theory and future SAR image-domain stability questions.
\end{itemize}

\section{Background}

\subsection{Euler characteristic transform}

Let \(K \subset \R^d\) be a compact tame set, and let \(v \in \Sphere^{d-1}\) be a direction. The directional height function is
\[
    h_v(x) = \langle x,v\rangle.
\]
The Euler characteristic transform of \(K\) is the function
\[
    \ECT_K(v,t)
    =
    \chi\bigl(K \cap h_v^{-1}((-\infty,t])\bigr),
    \qquad
    (v,t)\in \Sphere^{d-1}\times \R.
\]
Here \(\chi\) denotes Euler characteristic. Intuitively, \(\ECT_K(v,t)\) records how the topology of \(K\) appears as \(K\) is swept by half-spaces orthogonal to \(v\).

\subsection{Related topological transforms}

The ECT sits in a family of directional topological transforms. The persistent homology transform (PHT) records persistence diagrams of directional filtrations and was introduced as an injective statistic for modeling shapes and surfaces \cite{TurnerMukherjeeBoyer2014PHT}. The ECT replaces persistence diagrams by Euler characteristic curves, making it computationally simpler while retaining strong reconstruction properties on tame or constructible shape classes. Ghrist, Levanger, and Mai interpret this phenomenon through Euler integral transforms \cite{GhristLevangerMai2018EulerIntegral}, while Curry, Mukherjee, and Turner prove sufficiency results showing when finitely many directions determine a shape \cite{CurryMukherjeeTurner2022Directions}.

For statistical applications, the smooth Euler characteristic transform (SECT) integrates Euler characteristic curves to obtain functional summaries with better linear and functional-analytic structure. Crawford et al. used the SECT for clinical-outcome prediction from glioblastoma images \cite{CrawfordMonodChenMukherjeeRabadan2020SECT}. More recently, Meng et al. developed a probability-space framework for random shapes represented through the SECT \cite{MengWangCrawfordEloyan2024RandomSECT}. These statistical developments are adjacent to the present paper, but our focus is different: we seek direct high-probability stability statements for ECT errors induced by random geometric perturbations.

The emphasis on stability is consistent with a broader theme in topological data analysis. Classical persistence-diagram stability results show that small perturbations of suitable functions lead to controlled perturbations of persistence summaries \cite{CohenSteinerEdelsbrunnerHarer2005Stability}. Probabilistic work in TDA often studies distributions of topological summaries, statistical inference, or random-shape models. The present paper takes a narrower route: it treats a deterministic ECT stability theorem as an input and records how tail estimates for the induced geometric perturbation transfer into high-probability ECT bounds.

\section{Deterministic stability input}

The framework assumes a deterministic ECT stability statement of the following abstract form. Let \(\mathcal{C}\) be a class of compact tame sets, let \(d_{\mathrm{geom}}\) be a geometric distance on \(\mathcal{C}\), and let \(d_{\ECT}\) be a distance between ECTs. Suppose there exists a nondecreasing function \(\Phi:[0,\infty)\to[0,\infty)\) such that
\begin{equation}
    d_{\ECT}(K,L)
    \le
    \Phi\bigl(d_{\mathrm{geom}}(K,L)\bigr)
    \label{eq:deterministic-stability}
\end{equation}
for all \(K,L\in \mathcal{C}\). This gives a natural way to compare geometric difference between shapes with functional difference between their ECTs. The function \(\Phi\) is a stability modulus: it specifies how geometric error is converted into ECT error.

This paper does not assert that \eqref{eq:deterministic-stability} holds for every reasonable shape class or every geometric metric. The objects \(\mathcal{C}\), \(d_{\mathrm{geom}}\), \(d_{\ECT}\), and \(\Phi\) must come from a deterministic theorem or be proved separately. For the first concrete instantiation we use the one-dimensional embedded-shape stability theorem of Marsh and Beers \cite[Theorem~2]{marsh-lewis-beers-davis-1d-ect}. This choice is compatible with pipelines that extract contours, skeletons, graphs, or target boundary curves. It is not, by itself, a theorem for raw grayscale SAR images, range-Doppler maps, masks, or cubical complexes.

\subsection{The Marsh--Beers one-dimensional input}

Let \(Z\) be a finite one-dimensional CW complex with fixed CW structure
\[
    Z^*=(Z,Z_0,\{\Phi_\lambda\}_{\lambda\in\Lambda}).
\]
For \(r\geq 1\), let \(\mathcal G^r(Z^*,d)\) denote the class of subsets \(X\subseteq\R^d\) that arise as images of \(C^r\) homeomorphisms from \(Z\) into \(\R^d\), in the sense of Marsh--Beers. They define a metric \(d_{Z^*}\) on \(\mathcal G^r(Z^*,d)\) by controlling both the uniform distance between compatible parametrizations and the change in arc length on each one-cell. They also use the ECT norm
\[
    \norm{\ECT_X}
    =
    \sup_{v\in \Sphere^{d-1}}
    \int_{-a}^{a}\left|\ECT_X(v,t)\right|\,dt,
\]
where \(a\) is chosen large enough that the shapes under consideration lie in the Euclidean ball of radius \(a\).

\begin{theorem}[Marsh--Beers \citealp{marsh-lewis-beers-davis-1d-ect}, Theorem~2]
\label{thm:marsh-beers-input}
Let \(Z^*\) be a fixed finite one-dimensional CW structure and let \(r\geq 2\). The map
\[
    X\longmapsto \ECT_X
\]
is continuous on \(\mathcal G^r(Z^*,d)\) equipped with the metric \(d_{Z^*}\). More precisely, if \(X,Y\in\mathcal G^r(Z^*,d)\), if \(X\) has curvature bounded by \(M\), and if the image of the \(\lambda\)-th one-cell in \(X\) has arc length \(L_\lambda\), then whenever \(d_{Z^*}(X,Y)<\varepsilon\),
\[
    \norm{\ECT_X-\ECT_Y}
    \le
    \Phi_{MB}(\varepsilon)
    :=
    \card{Z_0}\varepsilon+\sum_{\lambda\in\Lambda}G_\lambda(\varepsilon),
\]
where
\[
G_\lambda(\varepsilon)
:=
\begin{cases}
8\sqrt{L_\lambda n_\lambda\varepsilon}+n_\lambda\varepsilon,
& L_\lambda/n_\lambda>2\varepsilon,\\
11n_\lambda\varepsilon,
& L_\lambda/n_\lambda\leq 2\varepsilon,
\end{cases}
\]
and
\[
    n_\lambda
    :=
    \max\left\{
        \left\lceil
        \left(\frac{M^2L_\lambda^3}{24\varepsilon}\right)^{1/3}
        \right\rceil,
        \left\lceil\frac{L_\lambda M}{\pi}\right\rceil
    \right\}.
\]
\end{theorem}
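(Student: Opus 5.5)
Since this is the Marsh--Beers theorem imported as a black box, the plan is to reconstruct the structure of their argument rather than invoke anything earlier in the paper. The approach is to discretize each curved one-cell into short nearly straight pieces, compare the ECT of \(X\) and \(Y\) piece by piece, and add up the errors. The first step is a localization formula. For a fixed direction \(v\), the Euler characteristic of a sublevel set of a one-dimensional complex is the number of vertices below height \(t\) minus the number of edges entirely below height \(t\). Subdivide each one-cell \(\lambda\) of both \(X\) and \(Y\) into \(n_\lambda\) arcs of equal parameter length, using compatible parametrizations that are \(\varepsilon\)-close in sup norm. Then \(\ECT_X(v,\cdot)-\ECT_Y(v,\cdot)\) splits into three kinds of contributions:
\begin{itemize}
\item contributions from the original vertices \(Z_0\);
\item contributions from the new subdivision vertices;
\item contributions from the interior behaviour of each small arc.
\end{itemize}
Each contribution is integrated over \(t\in[-a,a]\).

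The second step handles the vertices. A vertex \(p\) of \(X\) and its partner \(q\) of \(Y\) satisfy \(\norm{p-q}<\varepsilon\). The indicator functions \(\mathbf 1\{\langle p,v\rangle\le t\}\) and \(\mathbf 1\{\langle q,v\rangle\le t\}\) therefore differ on a \(t\)-interval of length at most \(\varepsilon\). This gives the term \(\card{Z_0}\varepsilon\). The same estimate applied to subdivision vertices contributes the \(n_\lambda\varepsilon\) summands.

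The third step, which I expect to be the main obstacle, controls each small arc, where the curvature bound \(M\) enters. First, the condition \(n_\lambda\ge L_\lambda M/\pi\) makes each subarc of \(X\) have total turning less than \(\pi\). Then in any direction \(v\) the height along the subarc has at most one interior critical point, so its sublevel contribution is that of a segment with at most one "bump". Second, the chord–arc deviation of a subarc of length \(\ell=L_\lambda/n_\lambda\) is about \(M\ell^2/8\), and the choice \(n_\lambda\ge (M^2L_\lambda^3/(24\varepsilon))^{1/3}\) is calibrated to keep this deviation comparable to \(\varepsilon\). The subarc of \(X\) is then close to its chord, and \(Y\)'s subarc is \(\varepsilon\)-close to it with comparable length, using the arc-length part of \(d_{Z^*}\). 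The Euler characteristic of an arc's sublevel set can disagree with that of its chord only on a \(t\)-range where the arc is not monotone. I would bound the measure of that range by a quantity like \(\sqrt{\ell\varepsilon}\), using the fact that an \(\varepsilon\)-perturbation can create a spurious excursion of height at most \(O(\sqrt{\ell\varepsilon})\) while respecting the length constraint; this is a Pythagorean argument. Summing over the \(n_\lambda\) pieces gives \(n_\lambda\sqrt{\ell\varepsilon}=\sqrt{L_\lambda n_\lambda\varepsilon}\), with constant \(8\) after careful bookkeeping. In the degenerate case \(L_\lambda/n_\lambda\le 2\varepsilon\), I would instead use the crude bound that each piece lies within a ball of radius \(O(\varepsilon)\), which yields \(11n_\lambda\varepsilon\).

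Finally, I would take the supremum over \(v\in\Sphere^{d-1}\), noting that every bound above is uniform in \(v\). Summing over cells gives \(\Phi_{MB}(\varepsilon)\), and continuity follows because \(\Phi_{MB}(\varepsilon)\to0\) as \(\varepsilon\to0\). To see this, note that \(n_\lambda\sim\varepsilon^{-1/3}\), so \(\sqrt{n_\lambda\varepsilon}\sim\varepsilon^{1/3}\) and \(n_\lambda\varepsilon\sim\varepsilon^{2/3}\).
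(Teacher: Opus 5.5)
The paper gives no proof of this statement; it is quoted from Marsh--Beers, so your outline has to stand on its own. The bookkeeping is fine: each vertex term is at most \(\varepsilon\) because \(|\langle p-q,v\rangle|<\varepsilon\), inclusion--exclusion over the \(n_\lambda\) pieces works, everything is uniform in \(v\), and \(\Phi_{MB}(\varepsilon)\to0\). But step three, which must produce the whole \(\sqrt{L_\lambda n_\lambda\varepsilon}\) term, has a genuine gap.

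You bound the \emph{measure} of the \(t\)-range on which an arc's sublevel Euler characteristic differs from its chord's. On that range, however, the integrand is integer-valued with unbounded multiplicity: an arc whose height oscillates many times has many sublevel components there.
\begin{itemize}
\item For \(X\) you exclude this via ``total turning \(\le\pi\) gives at most one interior critical point''. This is false. A planar arc of length \(\pi/M\) with tangent angle \(\pi/2+\delta\sin(ks)\) and \(\delta k=M\) has curvature \(\le M\) and total turning \(2\), yet \(\langle\gamma(s),e_1\rangle\) has about \(1/\delta\) critical points.
\item For \(Y\) there is no curvature hypothesis at all, and a Pythagorean bound on excursion \emph{height} says nothing about the \emph{number} of excursions.
\item You also misread the cube-root condition. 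It says exactly \(M^2\ell^3/24\le\varepsilon\), i.e.\ the arc-minus-chord \emph{length defect} of each piece of \(X\) is at most \(\varepsilon\). This comes from Schur-type comparison with a circular arc of curvature \(M\), chord \(\ge(2/M)\sin(M\ell/2)\ge\ell-M^2\ell^3/24\), valid under a turning cap such as \(M\ell\le\pi\). The distance from the arc to its chord is then only \(O(\sqrt{\varepsilon\ell})\), not comparable to \(\varepsilon\).
\end{itemize}

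The missing idea is a multiplicity-aware, length-based estimate. For a piece with height function \(f=\langle\gamma,v\rangle\) and endpoint heights \(h_0,h_1\), counting preimages (Banach indicatrix) gives
\[
\int_{\R}\bigl|\chi(\{f\le t\})-\mathbf{1}\{t\ge\min(h_0,h_1)\}\bigr|\,dt\le\tfrac12\bigl(\mathrm{TV}(f)-|h_1-h_0|\bigr).
\]
Suppose the piece has chord \(c\) and length \(c+e\). Split \(v\) along and across the chord direction \(u\), and apply Cauchy--Schwarz with \(\int(1-\langle\gamma',u\rangle^2)\,ds\le2e\). This yields \(\mathrm{TV}(f)-|h_1-h_0|\le e+\sqrt{2e(c+e)}\), uniformly in \(v\).

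The second gap is that \(d_{Z^*}\) controls only the length of each whole one-cell, not of sub-arcs. So your ``comparable length'' for \(Y\)'s pieces is unavailable: \(Y\) may place all its excess length on one piece, and the per-piece \(O(\sqrt{\ell\varepsilon})\) claim fails. What survives is an aggregate bound. From \(\operatorname{len}(Y_\lambda)<L_\lambda+\varepsilon\), endpoint shifts \(<\varepsilon\), and \(X\)'s chord defects \(\le\varepsilon\), one gets \(\sum_i e_i^Y\le(3n_\lambda+1)\varepsilon\). Cauchy--Schwarz across pieces then gives
\[
\sum_i\sqrt{2e_i^Y(c_i^Y+e_i^Y)}\le\sqrt{2(3n_\lambda+1)\varepsilon(L_\lambda+\varepsilon)},
\]
which is the genuine source of a \(\sqrt{L_\lambda n_\lambda\varepsilon}\) term. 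With these two ingredients your architecture yields a bound of the form \(\card{Z_0}\varepsilon+\sum_\lambda O\bigl(n_\lambda\varepsilon+\sqrt{L_\lambda n_\lambda\varepsilon}\bigr)\). The specific constants \(8\) and \(11\) would still require the original Marsh--Beers argument.
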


The use of Theorem~\ref{thm:marsh-beers-input} below is therefore conditional on remaining inside the fixed admissible class \(\mathcal G^r(Z^*,d)\). In particular, arbitrary landmark, polygonal, cubical, or image-domain reconstructions are not automatically covered by this theorem. They are covered only after one verifies that the reconstruction produces \(C^r\) embedded one-dimensional objects in the same fixed CW class and that the geometric metric \(d_{Z^*}\) is controlled.

\section{Probabilistic framework}

All random objects below are defined on a probability space \((\Omega,\mathcal F,\Prob)\). We assume throughout that the geometric and ECT error quantities appearing in probability statements are measurable random variables.

Let \(K\in\mathcal{C}\) be a fixed clean shape and let \(\widetilde K\) be a random perturbed shape of \(K\). In the cleanest version of the theory, \(\widetilde K\in\mathcal{C}\) almost surely. Define the random geometric perturbation radius
\[
    R = d_{\mathrm{geom}}(K,\widetilde K).
\]
If \eqref{eq:deterministic-stability} applies to the pair \((K,\widetilde K)\), then
\[
    d_{\ECT}(K,\widetilde K)
    \le
    \Phi(R)
\]
almost surely. Thus the ECT error is controlled by the random variable \(R\). The main question becomes probabilistic: how large is \(R\), and with what probability?

\begin{theorem}[Abstract probabilistic stability transfer]
\label{thm:abstract-transfer}
Let \(K\in\mathcal{C}\), and let \(\widetilde K\) be a random element of \(\mathcal{C}\). Assume the deterministic stability estimate \eqref{eq:deterministic-stability}. If \(r\geq 0\) and
\[
    \Prob(R>r) \le \psi(r),
    \qquad
    R=d_{\mathrm{geom}}(K,\widetilde K),
\]
then
\[
    \Prob\left(d_{\ECT}(K,\widetilde K)>\Phi(r)\right)
    \le
    \psi(r).
\]
Equivalently, if \(r_\delta\) satisfies \(\psi(r_\delta)\le\delta\), then with probability at least \(1-\delta\),
\[
    d_{\ECT}(K,\widetilde K)
    \le
    \Phi(r_\delta).
\]
\end{theorem}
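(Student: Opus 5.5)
The plan is a direct event-inclusion argument combined with monotonicity of \(\Phi\). Fix \(r\ge 0\). Since \(\widetilde K\in\mathcal C\) almost surely and \(K\in\mathcal C\), the deterministic estimate \eqref{eq:deterministic-stability} applies pointwise to the pair \((K,\widetilde K(\omega))\). This holds for every \(\omega\) outside a null set \(N\), and gives
\[
    d_{\ECT}(K,\widetilde K(\omega))\le \Phi\bigl(R(\omega)\bigr).
\]

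The key step is to show the inclusion of events
\[
    \{d_{\ECT}(K,\widetilde K)>\Phi(r)\}\setminus N\subseteq \{R>r\}.
\]
I would argue by contraposition. If \(\omega\notin N\) and \(R(\omega)\le r\), then because \(\Phi\) is nondecreasing, \(\Phi(R(\omega))\le\Phi(r)\). Combining this with the pointwise bound gives \(d_{\ECT}(K,\widetilde K(\omega))\le\Phi(r)\), so \(\omega\) does not lie in the left-hand event. Both events are measurable by the standing measurability assumption of this section. Monotonicity of \(\Prob\) then yields
\[
    \Prob\bigl(d_{\ECT}(K,\widetilde K)>\Phi(r)\bigr)\le \Prob(R>r)+\Prob(N)=\Prob(R>r)\le\psi(r).
\]

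For the equivalent formulation, I would apply the first part with \(r=r_\delta\). Passing to complements gives
\[
    \Prob\bigl(d_{\ECT}(K,\widetilde K)\le\Phi(r_\delta)\bigr)\ge 1-\psi(r_\delta)\ge 1-\delta.
\]

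There is no real obstacle in this argument. The only points needing care are bookkeeping ones:
\begin{itemize}
    \item the deterministic bound holds only almost surely, which is absorbed by the null set \(N\);
    \item the events must be measurable, which is assumed in this section;
    \item monotonicity of \(\Phi\) is used only in the direction \(R\le r\Rightarrow\Phi(R)\le\Phi(r)\), so no continuity or strict monotonicity of \(\Phi\) is needed.
\end{itemize}
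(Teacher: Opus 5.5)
Your proposal is correct and matches the paper's argument. Both use the pointwise bound \(d_{\ECT}(K,\widetilde K)\le\Phi(R)\) and the monotonicity of \(\Phi\) to get \(\{d_{\ECT}(K,\widetilde K)>\Phi(r)\}\subseteq\{R>r\}\), then take probabilities. Your explicit treatment of the exceptional null set \(N\) and the complement step for the \(r_\delta\) form are slightly more careful bookkeeping of the same proof.
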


\begin{proof}
By deterministic stability,
\[
    d_{\ECT}(K,\widetilde K)\le \Phi(R)
\]
almost surely. Since \(\Phi\) is nondecreasing, the event \(R\le r\) implies
\[
    d_{\ECT}(K,\widetilde K)\le \Phi(r).
\]
Therefore
\[
    \{d_{\ECT}(K,\widetilde K)>\Phi(r)\}
    \subseteq
    \{R>r\}.
\]
Taking probabilities gives the claim.
\end{proof}

\begin{theorem}[Open-threshold transfer]
\label{thm:open-transfer}
Let \(K\in\mathcal{C}\), and let \(\widetilde K\) be a random element of \(\mathcal{C}\). Suppose that for every \(\varepsilon>0\),
\[
    d_{\mathrm{geom}}(K,\widetilde K)<\varepsilon
    \quad\Longrightarrow\quad
    d_{\ECT}(K,\widetilde K)\le \Phi(\varepsilon).
\]
If
\[
    \Prob(R\geq r)\leq \psi(r),
    \qquad
    R=d_{\mathrm{geom}}(K,\widetilde K),
\]
then
\[
    \Prob\left(d_{\ECT}(K,\widetilde K)>\Phi(r)\right)
    \leq
    \psi(r).
\]
\end{theorem}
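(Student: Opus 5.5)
The argument follows the proof of Theorem~\ref{thm:abstract-transfer}, with the closed threshold $R\le r$ replaced by the open threshold $R<r$; this is why the hypothesis is a tail bound on $\Prob(R\geq r)$ rather than $\Prob(R>r)$. The aim is to establish the event inclusion
\[
    \{d_{\ECT}(K,\widetilde K)>\Phi(r)\}\subseteq\{R\geq r\},
\]
and then take probabilities. By contraposition, it suffices to show that on the event $\{R<r\}$ we have $d_{\ECT}(K,\widetilde K)\le\Phi(r)$.

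For $r>0$, fix an outcome $\omega$ with $R(\omega)=d_{\mathrm{geom}}(K,\widetilde K(\omega))<r$. Apply the standing implication pointwise with $\varepsilon=r$; this is legitimate because the hypothesis holds for every $\varepsilon>0$ and for every realization of $\widetilde K$. It gives $d_{\ECT}(K,\widetilde K(\omega))\le\Phi(r)$, which proves the inclusion. This step does not use monotonicity of $\Phi$, since we apply the hypothesis at $\varepsilon=r$ itself. This is the point of the open-threshold formulation: the Marsh--Beers bound $\Phi_{MB}$ in Theorem~\ref{thm:marsh-beers-input} is stated under the strict inequality $d_{Z^*}(X,Y)<\varepsilon$, and $\Phi_{MB}$ may not be monotone or continuous, because $n_\lambda$ involves ceilings and $G_\lambda$ switches branches. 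So it is better not to rely on approximating $r$ from above.

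For $r=0$, the event $\{R<0\}$ is empty, because $d_{\mathrm{geom}}\ge 0$. Then $\Prob(R\geq 0)=1\le\psi(0)$, so the conclusion holds trivially. No application of the hypothesis at $\varepsilon=0$ is needed, which matters because the hypothesis is only given for $\varepsilon>0$.

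Finally, by monotonicity of $\Prob$ and the standing measurability assumption on $R$ and $d_{\ECT}(K,\widetilde K)$,
\[
    \Prob\bigl(d_{\ECT}(K,\widetilde K)>\Phi(r)\bigr)\le\Prob(R\geq r)\le\psi(r).
\]
The only delicate points are handling $r=0$ separately and making sure the strict-inequality hypothesis is invoked exactly at $\varepsilon=r$. I expect no real obstacle beyond this bookkeeping.
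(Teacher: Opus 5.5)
Your proof is correct and follows the paper's own argument: on $\{R<r\}$ you apply the hypothesis at $\varepsilon=r$, which gives the inclusion $\{d_{\ECT}(K,\widetilde K)>\Phi(r)\}\subseteq\{R\geq r\}$, and then you take probabilities. Your separate treatment of $r=0$, where the hypothesis is unavailable but the bound is trivial because $\psi(0)\geq\Prob(R\geq 0)=1$, is bookkeeping that the paper leaves implicit.
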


\begin{proof}
The event \(R<r\) implies \(d_{\ECT}(K,\widetilde K)\le \Phi(r)\). Hence
\[
    \{d_{\ECT}(K,\widetilde K)>\Phi(r)\}
    \subseteq
    \{R\geq r\}.
\]
Taking probabilities gives the claim.
\end{proof}

\begin{remark}[High-probability admissibility]
Some perturbation models may produce objects outside \(\mathcal{C}\) on a small exceptional event. In that case one can introduce an admissibility event \(A\), require \eqref{eq:deterministic-stability} on \(A\), and obtain the conservative bound
\[
    \Prob\left(A\cap\{d_{\ECT}(K,\widetilde K)>\Phi(r)\}\right)
    \le
    \Prob(A\cap\{R>r\}).
\]
Equivalently, if failure outside \(A\) is counted as an error event, then the bound becomes
\[
    \Prob\left(A^c\cup\left(A\cap\{d_{\ECT}(K,\widetilde K)>\Phi(r)\}\right)\right)
    \le
    \Prob(A^c)+\Prob(A\cap\{R>r\}).
\]
This observation is useful, but it should not be used to hide regularity assumptions. A concrete application still has to prove that \(A\) has high probability.
\end{remark}

\section{One-dimensional random perturbation models}

We now record two stochastic models that preserve the one-dimensional embedded-shape viewpoint. These statements should be read with a fixed deterministic stability input of the form \eqref{eq:deterministic-stability}; in the intended application this input is Theorem~\ref{thm:marsh-beers-input}, with \(d_{\mathrm{geom}}=d_{Z^*}\) and \(d_{\ECT}\) given by the Marsh--Beers ECT norm. The assumptions below are deliberately stated at the level of the geometric metric, because different reconstruction or extraction procedures lead to different geometric error estimates.

\subsection{Ambient displacement fields}

Let \(K\in\mathcal C\), and let \(\xi:K\to\R^d\) be a random displacement field. Define
\[
    \widetilde K
    =
    \{x+\xi(x):x\in K\}.
\]
Let \(A\) be the event that \(\widetilde K\in\mathcal C\) and that the deterministic stability theorem applies to the pair \((K,\widetilde K)\).

\begin{proposition}[Ambient displacement transfer]
\label{prop:ambient-transfer}
Assume that on the event \(A\),
\[
    d_{\mathrm{geom}}(K,\widetilde K)
    \le
    L_\xi\sup_{x\in K}\norm{\xi(x)}
\]
for some constant \(L_\xi>0\). If
\[
    \Prob\left(\sup_{x\in K}\norm{\xi(x)}>s\right)
    \le
    q(s),
\]
then for every \(r\geq 0\),
\[
    \Prob\left(A^c\cup\left(A\cap\{d_{\ECT}(K,\widetilde K)>\Phi(r)\}\right)\right)
    \le
    \Prob(A^c)+q(r/L_\xi).
\]
\end{proposition}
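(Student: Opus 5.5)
The plan is to reduce the statement to an event inclusion on the admissibility event \(A\), exactly as in the proof of Theorem~\ref{thm:abstract-transfer}. I will then bound the resulting probability using the assumed tail bound \(q\). Write \(S=\sup_{x\in K}\norm{\xi(x)}\) and \(R=d_{\mathrm{geom}}(K,\widetilde K)\). Throughout I use the standing measurability convention of the probabilistic framework. The first step is to observe that on \(A\) the deterministic estimate \eqref{eq:deterministic-stability} applies to the pair \((K,\widetilde K)\), so \(d_{\ECT}(K,\widetilde K)\le\Phi(R)\) pointwise on \(A\).

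The second step is the key inclusion
\[
    A\cap\{d_{\ECT}(K,\widetilde K)>\Phi(r)\}\subseteq A\cap\{R>r\}\subseteq \{S>r/L_\xi\}.
\]
For the first inclusion I argue by contraposition. On \(A\cap\{R\le r\}\), monotonicity of \(\Phi\) gives \(d_{\ECT}(K,\widetilde K)\le\Phi(R)\le\Phi(r)\). For the second, on \(A\) we have \(R\le L_\xi S\). Hence \(R>r\) forces \(L_\xi S>r\), that is, \(S>r/L_\xi\), and this uses \(L_\xi>0\).

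The final step is a union bound:
\[
    \Prob\bigl(A^c\cup(A\cap\{d_{\ECT}>\Phi(r)\})\bigr)\le\Prob(A^c)+\Prob(S>r/L_\xi)\le\Prob(A^c)+q(r/L_\xi).
\]
This is just the second display of the high-probability admissibility remark, combined with the inclusion above.

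There is no real obstacle here. The only points needing care are the following:
\begin{itemize}
    \item the stability estimate and the Lipschitz-type bound are used only on \(A\), never off it;
    \item the strict versus non-strict inequalities must match the form of the tail hypothesis;
    \item the events involved must be measurable, which is covered by the standing assumption.
\end{itemize}
If one instead wants to use the open-threshold form (Theorem~\ref{thm:open-transfer}, as needed for Marsh--Beers), the same argument goes through with \(\{R\ge r\}\). In that case one would need a tail bound on \(\{S\ge r/L_\xi\}\). That point I would flag rather than absorb into the statement.
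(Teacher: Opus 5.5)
Your proposal is correct and matches the paper's proof: on \(A\) the Lipschitz-type bound gives \(A\cap\{R>r\}\subseteq\{\sup_{x\in K}\norm{\xi(x)}>r/L_\xi\}\), on \(A\cap\{R\le r\}\) deterministic stability plus monotonicity of \(\Phi\) gives \(d_{\ECT}(K,\widetilde K)\le\Phi(r)\), and a union bound with \(\Prob(A^c)\) finishes. Your caveat that the open-threshold (Marsh--Beers) version needs a tail bound on \(\{\sup_{x\in K}\norm{\xi(x)}\ge r/L_\xi\}\) is sound, and it matches how the paper states Corollary~\ref{cor:mb-landmark}.
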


\begin{proof}
On \(A\), the event \(d_{\mathrm{geom}}(K,\widetilde K)>r\) implies
\[
    L_\xi\sup_{x\in K}\norm{\xi(x)}>r.
\]
Therefore
\[
    \Prob\bigl(A\cap\{d_{\mathrm{geom}}(K,\widetilde K)>r\}\bigr)
    \le
    q(r/L_\xi).
\]
On \(A\cap\{d_{\mathrm{geom}}(K,\widetilde K)\le r\}\), deterministic stability gives \(d_{\ECT}(K,\widetilde K)\le\Phi(r)\). Hence
\[
    \Prob\left(A^c\cup\left(A\cap\{d_{\ECT}(K,\widetilde K)>\Phi(r)\}\right)\right)
    \le
    \Prob(A^c)+q(r/L_\xi).
\]
\end{proof}

\begin{remark}
Proposition~\ref{prop:ambient-transfer} is a transfer statement, not a random-field concentration theorem. It does not by itself prove a supremum tail bound for a continuous random field. Such bounds require assumptions on the field, for example regularity, metric-entropy, or covariance assumptions in the Gaussian case. This is the main additional work needed if the ambient-field model is used as a principal example.
\end{remark}

\subsection{Finite landmark perturbations with reconstruction}

The finite landmark model gives a self-contained finite-dimensional stochastic transfer setting. Let \(x=(x_1,\dots,x_n)\in(\R^d)^n\), let
\[
    \widetilde x_i=x_i+\xi_i,
    \qquad i=1,\dots,n,
\]
and let \(F:(\R^d)^n\to\mathcal C\) be a reconstruction map. Set
\[
    K=F(x_1,\dots,x_n),
    \qquad
    \widetilde K=F(\widetilde x_1,\dots,\widetilde x_n).
\]
The Lipschitz condition imposed below is a deterministic regularity assumption on the reconstruction pipeline. This paper does not fix a particular reconstruction algorithm. Instead, any reconstruction method can be inserted into the theorem once it is known to be Lipschitz into the geometric metric used by the chosen deterministic ECT stability theorem. In applications, this assumption may be verified directly or obtained from local regularity on an admissible compact parameter region.

\begin{theorem}[Landmark perturbation stability]
\label{thm:landmark-transfer}
Assume the deterministic stability estimate \eqref{eq:deterministic-stability}. Suppose that \(F\) is \(L_F\)-Lipschitz into \((\mathcal C,d_{\mathrm{geom}})\) with respect to the landmark sup norm:
\[
    d_{\mathrm{geom}}(F(x),F(y))
    \le
    L_F\max_{1\le i\le n}\norm{x_i-y_i}
\]
for all landmark configurations \(x,y\in(\R^d)^n\). If the coordinate perturbations satisfy
\[
    \Prob(\norm{\xi_i}>s)\le q_i(s),
    \qquad
    i=1,\dots,n,
\]
then for every \(r\geq 0\),
\[
    \Prob\left(d_{\ECT}(K,\widetilde K)>\Phi(r)\right)
    \le
    \sum_{i=1}^n q_i(r/L_F).
\]
In particular, if \(q_i\le q\) for all \(i\), then
\[
    \Prob\left(d_{\ECT}(K,\widetilde K)>\Phi(r)\right)
    \le
    n\,q(r/L_F).
\]
\end{theorem}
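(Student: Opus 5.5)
The argument reduces the statement to Theorem~\ref{thm:abstract-transfer} through a union bound over landmarks. Since \(K=F(x)\) and \(\widetilde K=F(\widetilde x)\), the Lipschitz hypothesis on \(F\) with \(y=\widetilde x\) gives, pointwise on \(\Omega\),
\[
    R=d_{\mathrm{geom}}(K,\widetilde K)
    \le
    L_F\max_{1\le i\le n}\norm{\widetilde x_i-x_i}
    =
    L_F\max_{1\le i\le n}\norm{\xi_i}.
\]
Because \(F\) takes values in \(\mathcal C\), the perturbed shape \(\widetilde K\) is a random element of \(\mathcal C\). The deterministic estimate \eqref{eq:deterministic-stability} therefore applies to the pair \((K,\widetilde K)\) on every outcome, so no admissibility event is needed.

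The next step is a tail bound \(\psi\) for \(R\). For \(r\ge 0\), the display above gives the event inclusion
\[
    \{R>r\}\subseteq\Bigl\{\max_i\norm{\xi_i}>r/L_F\Bigr\}=\bigcup_{i=1}^n\{\norm{\xi_i}>r/L_F\},
\]
using \(L_F>0\). If \(L_F=0\), then \(R=0\), and the claim is trivial because \(\Phi\) is nondecreasing and \(\Phi(R)\le\Phi(r)\) holds surely; we interpret \(q_i(r/0)\) accordingly or exclude this case. Subadditivity of \(\Prob\) then gives
\[
    \Prob(R>r)\le\sum_{i=1}^n q_i(r/L_F)=:\psi(r).
\]

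Theorem~\ref{thm:abstract-transfer} applied with this \(\psi\) yields
\[
    \Prob\bigl(d_{\ECT}(K,\widetilde K)>\Phi(r)\bigr)\le\sum_{i=1}^n q_i(r/L_F).
\]
Equivalently, the event \(\{d_{\ECT}>\Phi(r)\}\) lies inside \(\{R>r\}\), since \(d_{\ECT}\le\Phi(R)\le\Phi(r)\) whenever \(R\le r\). The uniform case \(q_i\le q\) follows at once by bounding each summand by \(q(r/L_F)\).

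None of the steps is technically hard. The only point that needs care is measurability: the events \(\{\norm{\xi_i}>s\}\) and \(\{d_{\ECT}(K,\widetilde K)>\Phi(r)\}\) must be events. The first holds because the \(\xi_i\) are random vectors. For the second we rely on the standing measurability convention of the probabilistic framework, and the event inclusion means we never need measurability of \(R\) itself.

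The argument does not require the \(\xi_i\) to be independent; the union bound is what makes the linear factor \(n\) appear. If one wishes to use the Marsh--Beers input with its strict inequality \(d_{Z^*}<\varepsilon\), the same proof works via Theorem~\ref{thm:open-transfer}, replacing \(>\) by \(\ge\) in the tail events.
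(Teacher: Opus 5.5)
Your proposal is correct and matches the paper's proof: you use the Lipschitz bound $d_{\mathrm{geom}}(K,\widetilde K)\le L_F\max_i\norm{\xi_i}$, then the event inclusion and union bound to get the tail function $\psi(r)=\sum_i q_i(r/L_F)$, and then apply Theorem~\ref{thm:abstract-transfer}. Your side remarks on $L_F=0$, measurability, and the strict-inequality variant via Theorem~\ref{thm:open-transfer} agree with the paper; the last is exactly how Corollary~\ref{cor:mb-landmark} is handled.
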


\begin{proof}
By the Lipschitz assumption on \(F\),
\[
    d_{\mathrm{geom}}(K,\widetilde K)
    \le
    L_F \max_{1\le i\le n}\norm{\xi_i}.
\]
Hence
\[
    \{d_{\mathrm{geom}}(K,\widetilde K)>r\}
    \subseteq
    \bigcup_{i=1}^n \{\norm{\xi_i}>r/L_F\}.
\]
The union bound gives
\[
    \Prob\bigl(d_{\mathrm{geom}}(K,\widetilde K)>r\bigr)
    \le
    \sum_{i=1}^n q_i(r/L_F).
\]
Applying Theorem~\ref{thm:abstract-transfer} proves the result.
\end{proof}

\begin{corollary}[Bounded landmark perturbations]
\label{cor:landmark-bounded}
Under the hypotheses of Theorem~\ref{thm:landmark-transfer}, suppose
\[
    \norm{\xi_i}\le s_0
    \qquad
    \text{almost surely for every } i.
\]
Then
\[
    d_{\ECT}(K,\widetilde K)
    \le
    \Phi(L_Fs_0)
\]
almost surely.
\end{corollary}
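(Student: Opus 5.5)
The plan is to reduce the corollary to a deterministic statement that holds on a full-measure event, rather than to pass through the tail function of Theorem~\ref{thm:landmark-transfer}. Let \(\Omega_0\subseteq\Omega\) be the event on which \(\norm{\xi_i}\le s_0\) for every \(i=1,\dots,n\). Each of the \(n\) defining conditions holds almost surely, so \(\Prob(\Omega_0^c)\le\sum_{i=1}^n\Prob(\norm{\xi_i}>s_0)=0\), and hence \(\Prob(\Omega_0)=1\). It therefore suffices to fix \(\omega\in\Omega_0\) and prove the ECT bound pointwise at \(\omega\).

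For fixed \(\omega\in\Omega_0\), I will apply the Lipschitz hypothesis on \(F\) to the configurations \(x\) and \(\widetilde x=x+\xi\). Since \(\widetilde x_i-x_i=\xi_i\), this gives
\[
    d_{\mathrm{geom}}(K,\widetilde K)\le L_F\max_{1\le i\le n}\norm{\xi_i}\le L_Fs_0 .
\]
By the deterministic stability estimate \eqref{eq:deterministic-stability}, which is assumed to hold for all pairs in \(\mathcal C\), we have \(d_{\ECT}(K,\widetilde K)\le\Phi(d_{\mathrm{geom}}(K,\widetilde K))\). Because \(\Phi\) is nondecreasing, this is at most \(\Phi(L_Fs_0)\). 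This bound holds for every \(\omega\in\Omega_0\), which proves the claim almost surely.

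An alternative route is to invoke Theorem~\ref{thm:landmark-transfer} with \(r=L_Fs_0\) and \(q_i(s)=\mathbf 1\{s<s_0\}\). This choice is a valid tail bound, since \(\Prob(\norm{\xi_i}>s)=0\) whenever \(s\ge s_0\). It gives \(\Prob(d_{\ECT}(K,\widetilde K)>\Phi(L_Fs_0))\le\sum_i q_i(s_0)=0\). This route needs the convention \(L_F>0\) so that \(r/L_F=s_0\) makes sense. The direct argument avoids that issue: when \(L_F=0\) the bound reads \(\Phi(0)\) and still follows by the same monotonicity.

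There is no substantial obstacle. The only points to handle carefully are the following:
\begin{itemize}
    \item passing from the \(n\) separate almost-sure statements to a single full-measure event, which uses finiteness of \(n\) and a countable union of null sets;
    \item using monotonicity of \(\Phi\), not continuity, since \(\Phi\) is only assumed nondecreasing.
\end{itemize}
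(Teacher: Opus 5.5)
Your proof is correct and is essentially the paper's argument: the Lipschitz hypothesis gives \(d_{\mathrm{geom}}(K,\widetilde K)\le L_F s_0\) on a full-measure event. The deterministic stability estimate and the monotonicity of \(\Phi\) then give the bound; your care in intersecting the \(n\) almost-sure events and your alternative route through Theorem~\ref{thm:landmark-transfer} are both valid additions the paper leaves implicit.
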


\begin{proof}
The Lipschitz assumption gives \(d_{\mathrm{geom}}(K,\widetilde K)\le L_Fs_0\) almost surely. The deterministic stability estimate then gives the claim.
\end{proof}

\begin{corollary}[Sub-Gaussian landmark perturbations]
\label{cor:landmark-subgaussian}
Under the hypotheses of Theorem~\ref{thm:landmark-transfer}, suppose
\[
    \Prob(\norm{\xi_i}>s)\le C\exp(-a s^2)
\]
for all \(i\) and constants \(C,a>0\). Then for every \(0<\delta<nC\), with probability at least \(1-\delta\),
\[
    d_{\ECT}(K,\widetilde K)
    \le
    \Phi\left(
        L_F\sqrt{\frac{1}{a}\log\frac{nC}{\delta}}
    \right).
\]
\end{corollary}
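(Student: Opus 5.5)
The plan is to apply Theorem~\ref{thm:landmark-transfer} with the common tail function \(q(s)=C\exp(-as^2)\) and then choose the threshold \(r\) so that the resulting failure probability equals \(\delta\).

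By the ``in particular'' clause of Theorem~\ref{thm:landmark-transfer}, for every \(r\ge 0\),
\[
    \Prob\left(d_{\ECT}(K,\widetilde K)>\Phi(r)\right)
    \le
    n\,C\exp\!\left(-a\,\frac{r^2}{L_F^2}\right).
\]
I will set the right-hand side equal to \(\delta\) and solve for \(r\). This gives
\[
    r_\delta=L_F\sqrt{\frac{1}{a}\log\frac{nC}{\delta}}.
\]

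The only point requiring care is that \(r_\delta\) be a well-defined nonnegative real number. The assumption \(0<\delta<nC\) gives \(nC/\delta>1\), so the logarithm is strictly positive and the square root is real and positive. Since \(a>0\) and \(L_F>0\), we get \(r_\delta\ge 0\), which is the range where Theorem~\ref{thm:landmark-transfer} applies. If \(L_F=0\), then \(d_{\mathrm{geom}}(K,\widetilde K)=0\) deterministically and the bound is immediate, so I will note this case separately.

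Substituting \(r=r_\delta\) yields \(\Prob\bigl(d_{\ECT}(K,\widetilde K)>\Phi(r_\delta)\bigr)\le\delta\). Passing to the complement gives the stated bound with probability at least \(1-\delta\). This is exactly the ``equivalently'' form of Theorem~\ref{thm:abstract-transfer} with \(\psi(r)=nC\exp(-ar^2/L_F^2)\).

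I expect no real obstacle: the argument is an algebraic inversion of the tail bound. The only hypotheses that need checking are the positivity of the logarithm and the nonnegativity of \(r_\delta\).
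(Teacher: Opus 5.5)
Your proposal is correct and matches the paper's proof: the paper applies Theorem~\ref{thm:landmark-transfer} with $q(s)=C\exp(-as^2)$ to get the bound $nC\exp(-a(r/L_F)^2)$, then chooses $r=L_F\sqrt{a^{-1}\log(nC/\delta)}$ so that this bound equals $\delta$. Your check that $0<\delta<nC$ makes the logarithm positive, and your separate treatment of $L_F=0$ (where the paper implicitly assumes $L_F>0$), are harmless extras.
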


\begin{proof}
Theorem~\ref{thm:landmark-transfer} gives
\[
    \Prob\left(d_{\ECT}(K,\widetilde K)>\Phi(r)\right)
    \le
    nC\exp\left(-a(r/L_F)^2\right).
\]
Choose \(r=L_F\sqrt{a^{-1}\log(nC/\delta)}\).
\end{proof}

\begin{corollary}[Sub-exponential landmark perturbations]
\label{cor:landmark-subexponential}
Under the hypotheses of Theorem~\ref{thm:landmark-transfer}, suppose
\[
    \Prob(\norm{\xi_i}>s)\le C\exp(-a s)
\]
for all \(i\) and constants \(C,a>0\). Then for every \(0<\delta<nC\), with probability at least \(1-\delta\),
\[
    d_{\ECT}(K,\widetilde K)
    \le
    \Phi\left(
        \frac{L_F}{a}\log\frac{nC}{\delta}
    \right).
\]
\end{corollary}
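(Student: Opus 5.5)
The plan is to mirror the proof of Corollary~\ref{cor:landmark-subgaussian}: substitute the assumed sub-exponential tail into the union-bound estimate of Theorem~\ref{thm:landmark-transfer}, then solve for the radius \(r\) that makes the resulting failure probability equal to \(\delta\).

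First, set \(q_i(s)=q(s):=C\exp(-as)\) for every \(i\). Theorem~\ref{thm:landmark-transfer} then gives, for every \(r\ge 0\),
\[
    \Prob\left(d_{\ECT}(K,\widetilde K)>\Phi(r)\right)
    \le
    n\,q(r/L_F)
    =
    nC\exp\left(-\frac{a r}{L_F}\right).
\]

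Second, choose
\[
    r_\delta=\frac{L_F}{a}\log\frac{nC}{\delta}.
\]
The restriction \(0<\delta<nC\) guarantees \(\log(nC/\delta)>0\), so \(r_\delta>0\) is an admissible radius in Theorem~\ref{thm:landmark-transfer}. Substituting gives
\[
    nC\exp\left(-\log\frac{nC}{\delta}\right)=\delta .
\]
Hence the event \(\{d_{\ECT}(K,\widetilde K)>\Phi(r_\delta)\}\) has probability at most \(\delta\). Passing to the complement yields the stated bound with probability at least \(1-\delta\); this is the "equivalently" clause of Theorem~\ref{thm:abstract-transfer}.

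There is no genuine obstacle. The only points to check are that \(r_\delta\) is nonnegative, which is exactly why \(\delta<nC\) is assumed, and that the exponent simplifies as computed: \(a\,(r_\delta/L_F)=\log(nC/\delta)\). The argument does not use monotonicity of \(\Phi\) beyond what is already built into Theorem~\ref{thm:abstract-transfer}.
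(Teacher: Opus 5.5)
Your proposal is correct and follows the paper's proof exactly: insert the tail \(C\exp(-as)\) into Theorem~\ref{thm:landmark-transfer} to get the bound \(nC\exp(-ar/L_F)\), then choose \(r=(L_F/a)\log(nC/\delta)\). Your added checks, that \(\delta<nC\) makes this radius positive and that the exponent cancels to give exactly \(\delta\), are the right details to verify.
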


\begin{proof}
Theorem~\ref{thm:landmark-transfer} gives
\[
    \Prob\left(d_{\ECT}(K,\widetilde K)>\Phi(r)\right)
    \le
    nC\exp(-a r/L_F).
\]
Choose \(r=(L_F/a)\log(nC/\delta)\).
\end{proof}

\begin{corollary}[Polynomial-tail landmark perturbations]
\label{cor:landmark-polynomial}
Under the hypotheses of Theorem~\ref{thm:landmark-transfer}, suppose
\[
    \Prob(\norm{\xi_i}>s)\le C s^{-\alpha}
\]
for all \(i\) and constants \(C,\alpha>0\). Then for every \(0<\delta<1\), with probability at least \(1-\delta\),
\[
    d_{\ECT}(K,\widetilde K)
    \le
    \Phi\left(
        L_F\left(\frac{nC}{\delta}\right)^{1/\alpha}
    \right).
\]
\end{corollary}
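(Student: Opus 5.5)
The plan is to follow the same route as Corollaries~\ref{cor:landmark-subgaussian} and~\ref{cor:landmark-subexponential}: apply Theorem~\ref{thm:landmark-transfer} with the common tail function \(q(s)=Cs^{-\alpha}\), then choose \(r\) so that the resulting bound equals \(\delta\). For \(r>0\), Theorem~\ref{thm:landmark-transfer} gives
\[
    \Prob\left(d_{\ECT}(K,\widetilde K)>\Phi(r)\right)
    \le
    n\,C\left(\frac{r}{L_F}\right)^{-\alpha}
    =
    nC\,L_F^{\alpha}\,r^{-\alpha}.
\]
Setting this equal to \(\delta\) and solving gives
\[
    r_\delta=L_F\left(\frac{nC}{\delta}\right)^{1/\alpha}.
\]
Since \(\delta>0\) and \(L_F>0\), this \(r_\delta\) is a strictly positive number, so the power-law tail is evaluated at a positive argument and the computation makes sense.

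Substituting \(r=r_\delta\) shows that the failure probability is at most \(\delta\). Equivalently, by the ``\(r_\delta\)'' form of Theorem~\ref{thm:abstract-transfer}, the bound \(d_{\ECT}(K,\widetilde K)\le\Phi(r_\delta)\) holds with probability at least \(1-\delta\), which is exactly the claimed statement.

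The only point requiring care is the range of \(\delta\). Unlike the exponential corollaries, no upper restriction such as \(\delta<nC\) is needed to keep \(r_\delta\) well defined: \(r_\delta\) is positive for every \(\delta\in(0,1)\). If \(\delta\ge nC\), then \(r_\delta\le L_F\) and the tail bound \(Cs^{-\alpha}\) may exceed one at \(s=r_\delta/L_F\). In that case the inequality still holds trivially, because any probability bound that is at least one is vacuous but true.

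I also need \(L_F>0\) in order to divide by it. If \(L_F=0\), then \(d_{\mathrm{geom}}(K,\widetilde K)=0\) almost surely, and the claim follows from the monotonicity of \(\Phi\). I do not expect any genuine obstacle in this argument.
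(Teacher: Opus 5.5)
Your proposal is correct and matches the paper's proof: apply Theorem~\ref{thm:landmark-transfer} with $q(s)=Cs^{-\alpha}$, then choose $r=L_F(nC/\delta)^{1/\alpha}$. One small correction to your aside: the worry that the tail bound may exceed one when $\delta\ge nC$ is unfounded, since at $s=r_\delta/L_F$ you get exactly $Cs^{-\alpha}=\delta/n<1$ for every $\delta\in(0,1)$, so that case discussion can be dropped.
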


\begin{proof}
Theorem~\ref{thm:landmark-transfer} gives
\[
    \Prob\left(d_{\ECT}(K,\widetilde K)>\Phi(r)\right)
    \le
    nC(r/L_F)^{-\alpha}.
\]
Choose \(r=L_F(nC/\delta)^{1/\alpha}\).
\end{proof}

\begin{corollary}[Marsh--Beers landmark stability]
\label{cor:mb-landmark}
Let \(Z^*\) be a fixed finite one-dimensional CW structure, and let \(F:(\R^d)^n\to\mathcal G^r(Z^*,d)\), \(r\geq 2\), be a reconstruction map. Let
\[
    K=F(x_1,\dots,x_n),
    \qquad
    \widetilde K=F(x_1+\xi_1,\dots,x_n+\xi_n).
\]
Assume that \(K\) has curvature bounded by \(M\), with one-cell arc lengths \(L_\lambda\), and assume that
\[
    d_{Z^*}(F(x),F(y))
    \le
    L_F\max_{1\le i\le n}\norm{x_i-y_i}
\]
for all landmark configurations under consideration. If
\[
    \Prob(\norm{\xi_i}\ge s)\le q_i(s),
    \qquad i=1,\dots,n,
\]
then for every \(\varepsilon>0\),
\[
    \Prob\left(
        \norm{\ECT_K-\ECT_{\widetilde K}}>
        \Phi_{MB}(\varepsilon)
    \right)
    \le
    \sum_{i=1}^n q_i(\varepsilon/L_F).
\]
\end{corollary}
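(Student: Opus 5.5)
The plan is to apply the open-threshold transfer, Theorem~\ref{thm:open-transfer}, with \(d_{\mathrm{geom}}=d_{Z^*}\), \(d_{\ECT}\) the Marsh--Beers ECT norm, and \(\Phi=\Phi_{MB}\). The open rather than closed version is needed because Theorem~\ref{thm:marsh-beers-input} only gives a conclusion under the strict hypothesis \(d_{Z^*}(X,Y)<\varepsilon\). This is also why the landmark tails in the statement are written with \(\ge\).

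The first step is the deterministic implication. Fix \(\varepsilon>0\). Since \(F\) takes values in \(\mathcal G^r(Z^*,d)\) with \(r\ge 2\), both \(K\) and \(\widetilde K\) lie in this class for every realization. Here \(K\) is fixed with curvature bound \(M\) and arc lengths \(L_\lambda\), so the modulus \(\Phi_{MB}\) is determined by \(K\) alone and is deterministic. Theorem~\ref{thm:marsh-beers-input}, applied with \(X=K\) and \(Y=\widetilde K\), then gives
\[
d_{Z^*}(K,\widetilde K)<\varepsilon \;\Longrightarrow\; \norm{\ECT_K-\ECT_{\widetilde K}}\le\Phi_{MB}(\varepsilon).
\]
This is exactly the hypothesis of Theorem~\ref{thm:open-transfer}, with no monotonicity of \(\Phi_{MB}\) required. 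Monotonicity would in any case be delicate, since \(n_\lambda\) and the case split in \(G_\lambda\) depend on \(\varepsilon\). For that reason I would avoid Theorem~\ref{thm:abstract-transfer} and work at the fixed threshold \(\varepsilon\) directly.

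The second step is the tail bound for \(R=d_{Z^*}(K,\widetilde K)\). The Lipschitz hypothesis on \(F\) gives \(R\le L_F\max_i\norm{\xi_i}\). Hence \(\{R\ge\varepsilon\}\) is contained in \(\{\max_i\norm{\xi_i}\ge\varepsilon/L_F\}\), which is the union of the events \(\{\norm{\xi_i}\ge\varepsilon/L_F\}\). The union bound then yields
\[
\Prob(R\ge\varepsilon)\le\sum_i q_i(\varepsilon/L_F).
\]
Combining this with the first step through the inclusion
\[
\{\norm{\ECT_K-\ECT_{\widetilde K}}>\Phi_{MB}(\varepsilon)\}\subseteq\{R\ge\varepsilon\}
\]
gives the claim.

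The main obstacle is bookkeeping rather than analysis. One must check that the Lipschitz bound is used only on "landmark configurations under consideration," so it has to hold for \(x\) and \(x+\xi\) almost surely. One must also rely on the standing measurability assumption for the events involved. Finally, one must confirm that the constants in \(\Phi_{MB}\) come from \(K\) rather than \(\widetilde K\), which is what makes the threshold deterministic.
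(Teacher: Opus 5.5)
Your proposal is correct and follows the paper's proof. Both use the strict-threshold Marsh--Beers implication together with the Lipschitz bound to obtain \(\{\norm{\ECT_K-\ECT_{\widetilde K}}>\Phi_{MB}(\varepsilon)\}\subseteq\bigcup_i\{\norm{\xi_i}\ge\varepsilon/L_F\}\), then finish with the union bound; the only difference is that the paper argues this inclusion directly instead of citing Theorem~\ref{thm:open-transfer}, and, as in your argument, it never uses monotonicity of \(\Phi_{MB}\).
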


\begin{proof}
If \(\max_i\norm{\xi_i}<\varepsilon/L_F\), then
\[
    d_{Z^*}(K,\widetilde K)<\varepsilon.
\]
The Marsh--Beers stability input, Theorem~\ref{thm:marsh-beers-input}, gives
\[
    \norm{\ECT_K-\ECT_{\widetilde K}}
    \le
    \Phi_{MB}(\varepsilon).
\]
Therefore
\[
    \left\{
        \norm{\ECT_K-\ECT_{\widetilde K}}>
        \Phi_{MB}(\varepsilon)
    \right\}
    \subseteq
    \bigcup_{i=1}^n\{\norm{\xi_i}\ge \varepsilon/L_F\}.
\]
The union bound completes the proof.
\end{proof}

\begin{corollary}[Explicit Marsh--Beers landmark tail bounds]
\label{cor:mb-landmark-tails}
In the setting of Corollary~\ref{cor:mb-landmark}, suppose additionally that the perturbations have a common tail bound
\[
    \Prob(\norm{\xi_i}\ge s)\le q(s),
    \qquad i=1,\dots,n.
\]
If \(\varepsilon_\delta\) satisfies \(nq(\varepsilon_\delta/L_F)\le\delta\), then with probability at least \(1-\delta\),
\[
    \norm{\ECT_K-\ECT_{\widetilde K}}
    \le
    \Phi_{MB}(\varepsilon_\delta).
\]
In particular, one may take the following choices whenever the displayed logarithms are positive:
\[
\varepsilon_\delta
=
\begin{cases}
L_F\sqrt{a^{-1}\log(nC/\delta)},&
q(s)=C\exp(-as^2),\\[0.4em]
(L_F/a)\log(nC/\delta),&
q(s)=C\exp(-as),\\[0.4em]
L_F(nC/\delta)^{1/\alpha},&
q(s)=Cs^{-\alpha}.
\end{cases}
\]
\end{corollary}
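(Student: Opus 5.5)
The plan is to reduce everything to Corollary~\ref{cor:mb-landmark}, specialised to a common tail function, and then solve \(n\,q(\varepsilon/L_F)\le\delta\) explicitly for each tail model. First, since \(q_i=q\) for all \(i\), Corollary~\ref{cor:mb-landmark} gives, for every \(\varepsilon>0\),
\[
    \Prob\left(\norm{\ECT_K-\ECT_{\widetilde K}}>\Phi_{MB}(\varepsilon)\right)\le n\,q(\varepsilon/L_F).
\]
We apply this at \(\varepsilon=\varepsilon_\delta\). We need \(\varepsilon_\delta>0\), because Corollary~\ref{cor:mb-landmark} and the Marsh--Beers input are stated for positive \(\varepsilon\); this is where the proviso on positive logarithms enters. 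The hypothesis \(nq(\varepsilon_\delta/L_F)\le\delta\) then bounds the failure probability by \(\delta\). Passing to the complement gives the bound \(\norm{\ECT_K-\ECT_{\widetilde K}}\le\Phi_{MB}(\varepsilon_\delta)\) with probability at least \(1-\delta\).

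Second, for the three listed choices we only check \(nq(\varepsilon_\delta/L_F)\le\delta\), in fact with equality.
\begin{itemize}
\item Sub-Gaussian case: \(\varepsilon_\delta/L_F=\sqrt{a^{-1}\log(nC/\delta)}\), so \(a(\varepsilon_\delta/L_F)^2=\log(nC/\delta)\) and \(nC\exp(-a(\varepsilon_\delta/L_F)^2)=\delta\).
\item Sub-exponential case: the same computation without the square.
\item Polynomial case: \(nC(\varepsilon_\delta/L_F)^{-\alpha}=nC\cdot\delta/(nC)=\delta\).
\end{itemize}
In the first two cases positivity of the logarithm, i.e.\ \(\delta<nC\), is exactly what makes \(\varepsilon_\delta\) real and positive. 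In the polynomial case \(\varepsilon_\delta>0\) holds automatically.

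The only point needing care, rather than a real obstacle, is the matching of tail conventions. The corollary uses non-strict tails \(\Prob(\norm{\xi_i}\ge s)\), which is what the open-threshold form of Theorem~\ref{thm:marsh-beers-input} (hypothesis \(d_{Z^*}<\varepsilon\)) requires, as already handled in Corollary~\ref{cor:mb-landmark}. The listed parametric bounds are then read as bounds on \(\Prob(\norm{\xi_i}\ge s)\), as the statement assumes. A related subtlety is that \(\Phi_{MB}\) depends on \(\varepsilon\) also through \(n_\lambda\) and the case split in \(G_\lambda\), so it need not be monotone. We therefore never use monotonicity of \(\Phi_{MB}\): we invoke the bound directly at the single value \(\varepsilon_\delta\) rather than routing through Theorem~\ref{thm:abstract-transfer}.
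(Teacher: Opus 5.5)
Your proposal is correct and matches the paper's intended argument. The paper gives no separate proof: it treats the statement as Corollary~\ref{cor:mb-landmark} specialised to \(q_i=q\) and evaluated at \(\varepsilon=\varepsilon_\delta\), with the three radii obtained by solving \(nq(\varepsilon/L_F)=\delta\) exactly as in Corollaries~\ref{cor:landmark-subgaussian}--\ref{cor:landmark-polynomial}, and your remarks on positivity of \(\varepsilon_\delta\), the non-strict tail convention, and not relying on monotonicity of \(\Phi_{MB}\) are all sound.
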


\subsection{A sufficient condition for Lipschitz reconstruction}

The Lipschitz reconstruction hypothesis is a standard consequence of uniform deterministic regularity on a finite-dimensional admissible parameter region. The following proposition records a typical situation without committing to a particular reconstruction algorithm.

\begin{proposition}[Smooth finite-dimensional reconstruction families]
\label{prop:smooth-reconstruction-lipschitz}
Let \(\Theta\subset \R^m\) be a compact convex parameter set and let
\[
    F:\Theta\to \mathcal C
\]
be a family of shapes in the deterministic stability class. Suppose there is a normed vector space \(B\), a map \(\Gamma:\Theta\to B\), and a constant \(C_F>0\) such that
\[
    d_{\mathrm{geom}}(F(\theta),F(\theta'))
    \le
    C_F\norm{\Gamma(\theta)-\Gamma(\theta')}_B
\]
for all \(\theta,\theta'\in\Theta\). If \(\Gamma\) is \(C^1\) on a neighbourhood of \(\Theta\) and
\[
    \sup_{\theta\in\Theta}\norm{D\Gamma(\theta)}_{\infty\to B}<\infty,
\]
then \(F\) is Lipschitz on \(\Theta\) with respect to the parameter sup norm. More precisely,
\[
    d_{\mathrm{geom}}(F(\theta),F(\theta'))
    \le
    C_F
    \left(\sup_{\vartheta\in\Theta}\norm{D\Gamma(\vartheta)}_{\infty\to B}\right)
    \norm{\theta-\theta'}_\infty .
\]
\end{proposition}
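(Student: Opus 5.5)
The plan is to reduce everything to the mean value inequality for the \(C^1\) map \(\Gamma\), and then compose with the assumed comparison \(d_{\mathrm{geom}}(F(\theta),F(\theta'))\le C_F\norm{\Gamma(\theta)-\Gamma(\theta')}_B\). Set
\[
M_\Gamma:=\sup_{\vartheta\in\Theta}\norm{D\Gamma(\vartheta)}_{\infty\to B}<\infty,
\]
where \(\norm{\cdot}_{\infty\to B}\) is the operator norm of a linear map from \((\R^m,\norm{\cdot}_\infty)\) to \(B\). Since the displayed conclusion is exactly \(C_F M_\Gamma\norm{\theta-\theta'}_\infty\), it suffices to prove
\[
\norm{\Gamma(\theta)-\Gamma(\theta')}_B\le M_\Gamma\norm{\theta-\theta'}_\infty
\qquad\text{for all }\theta,\theta'\in\Theta .
\]

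Fix \(\theta,\theta'\in\Theta\) and consider the segment \(\gamma(t)=\theta'+t(\theta-\theta')\), \(t\in[0,1]\). By convexity of \(\Theta\), the whole segment lies in \(\Theta\), so the derivative bound \(M_\Gamma\) applies at every point of it. This is exactly why convexity is assumed: the supremum of \(D\Gamma\) is only taken over \(\Theta\), not over the surrounding neighbourhood. The map \(g(t)=\Gamma(\gamma(t))\) is \(C^1\) from a neighbourhood of \([0,1]\) into \(B\), because \(\Gamma\) is \(C^1\) on a neighbourhood of \(\Theta\). By the chain rule, \(g'(t)=D\Gamma(\gamma(t))(\theta-\theta')\), and hence
\[
\norm{g'(t)}_B\le M_\Gamma\norm{\theta-\theta'}_\infty .
\]

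The one step needing care is the passage from this derivative bound to a bound on \(g(1)-g(0)\), since \(B\) is only a normed space and need not be complete. Bochner or Riemann integration in \(B\) might therefore not be directly available. I will avoid integration altogether and use the mean value inequality for vector-valued functions, which does not require completeness. One route is Hahn--Banach: pick \(\ell\in B^*\) with \(\norm{\ell}=1\) and \(\ell(g(1)-g(0))=\norm{g(1)-g(0)}_B\), then apply the scalar mean value theorem to the real \(C^1\) function \(\ell\circ g\), whose derivative is bounded by \(\norm{g'(t)}_B\). Either way this yields \(\norm{g(1)-g(0)}_B\le\sup_{t}\norm{g'(t)}_B\), which gives the displayed Lipschitz bound for \(\Gamma\).

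Combining this with the assumed comparison inequality gives the stated estimate, and in particular shows that \(F\) is Lipschitz with constant \(C_F M_\Gamma\). Compactness of \(\Theta\) is not needed in the argument beyond ensuring the supremum hypothesis is natural: continuity of \(D\Gamma\) on a compact set gives finiteness automatically. Finally, I will remark that this is the hypothesis required in Theorem~\ref{thm:landmark-transfer} and Corollary~\ref{cor:mb-landmark} when landmark configurations stay in \(\Theta\).
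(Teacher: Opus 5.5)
Your proposal is correct and follows the paper's proof. Convexity keeps the segment from \(\theta'\) to \(\theta\) inside \(\Theta\); the vector-valued mean value inequality then gives \(\norm{\Gamma(\theta)-\Gamma(\theta')}_B\le \sup_{\vartheta\in\Theta}\norm{D\Gamma(\vartheta)}_{\infty\to B}\norm{\theta-\theta'}_\infty\), and the comparison hypothesis finishes. Your Hahn--Banach reduction to the scalar mean value theorem is a more explicit justification of the paper's appeal to the ``Banach-space mean-value estimate'', and it correctly covers a merely normed \(B\), as passing to the completion of \(B\) would also do.
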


\begin{proof}
Since \(\Theta\) is convex, the segment between \(\theta\) and \(\theta'\) is contained in \(\Theta\). The Banach-space mean-value estimate gives
\[
    \norm{\Gamma(\theta)-\Gamma(\theta')}_B
    \le
    \left(\sup_{\vartheta\in\Theta}\norm{D\Gamma(\vartheta)}_{\infty\to B}\right)
    \norm{\theta-\theta'}_\infty .
\]
Combining this with the assumed comparison between \(d_{\mathrm{geom}}\) and the \(B\)-norm gives the claim.
\end{proof}

Consequently, if \(\widetilde\theta=\theta+\xi\) remains in \(\Theta\) almost surely and the scalar coordinate perturbations \(\xi_j\) satisfy tail bounds, the same union-bound argument as in Theorem~\ref{thm:landmark-transfer} applies with
\[
    L_F
    =
    C_F
    \sup_{\vartheta\in\Theta}\norm{D\Gamma(\vartheta)}_{\infty\to B}.
\]

In the Marsh--Beers setting, this example should be read with
\[
    F(\Theta)\subset \mathcal G^r(Z^*,d),
    \qquad r\geq 2,
\]
and with \(d_{\mathrm{geom}}=d_{Z^*}\). The compact admissible region \(\Theta\) must then encode the regularity needed to keep the reconstructed objects in the same \(C^r\) embedded one-dimensional class, with the relevant curvature and arc-length quantities controlled. This is the precise point at which a concrete reconstruction method would enter the theory.

\section{Consequences for unbounded perturbations}

The model-specific corollaries show why deterministic boundedness of the raw perturbation is not the essential requirement. What matters is a tail bound for the induced geometric error. In the landmark model, this geometric error is controlled by
\[
    L_F\max_{1\le i\le n}\norm{\xi_i}.
\]
Thus unbounded coordinate perturbations can still produce high-probability ECT stability as long as their tails are controlled.

For sub-Gaussian landmark noise, Corollary~\ref{cor:landmark-subgaussian} gives logarithmic dependence on \(1/\delta\) inside a square root. For sub-exponential landmark noise, Corollary~\ref{cor:landmark-subexponential} gives logarithmic dependence on \(1/\delta\). For polynomial-tail noise, Corollary~\ref{cor:landmark-polynomial} gives polynomial dependence on \(1/\delta\). In the Marsh--Beers setting, Corollary~\ref{cor:mb-landmark-tails} inserts these radii into the explicit modulus \(\Phi_{MB}\). The qualitative conclusion is that ECT stability under unbounded perturbations is a probabilistic statement: rare large geometric deviations are allowed, but their probabilities determine the final ECT error bound.

The ambient displacement model has the same structure once a supremum tail bound for \(\sup_{x\in K}\norm{\xi(x)}\) is available. The paper therefore separates the deterministic ECT stability theorem from the stochastic estimate needed for each perturbation model.

\section{Boundary with SAR-specific work}

The present paper is intentionally not a SAR image theorem. For SAR imagery, the basic multiplicative model is
\[
    Y(x)=X(x)\eta(x),
\]
and the logarithmic transform gives
\[
    \log Y(x)=\log X(x)+\log \eta(x).
\]
This suggests a possible route from multiplicative speckle to additive random perturbations. However, applying the ECT directly to SAR data requires choosing the mathematical input object. If a SAR pipeline extracts a one-dimensional geometric object, such as a target contour, skeleton, graph, or boundary curve, then the present geometric theory may apply after bounding the extraction-induced perturbation. If the input remains a grayscale image, range-Doppler map, binary mask, or cubical complex, then a separate deterministic stability theorem is needed for image-domain or cubical-complex ECTs.

Thus SAR-specific work has a different mathematical obligation: it must prove that the SAR observation and preprocessing pipeline induce a controlled random perturbation of the one-dimensional topological object to which the ECT is applied. Separate work can address cubical-complex or image-domain deterministic stability directly.

\section{Discussion and limitations}

The abstract transfer theorem is intentionally simple. Its role is to make explicit that probabilistic ECT stability can be obtained by combining two independent ingredients: a deterministic ECT stability theorem and a tail bound for the induced geometric error. The mathematically substantive work in an application lies in proving one or both of these ingredients.

The landmark perturbation result gives a finite-dimensional stochastic transfer theorem once the deterministic reconstruction estimate is supplied. Its assumptions are transparent: the reconstruction map must be Lipschitz into the geometric metric used by the deterministic ECT theorem, and the landmark perturbations must have controlled tails. The proof does not require independence; independence would only be needed for sharper probability estimates than the union bound.

We do not fix a particular reconstruction algorithm in this paper. Instead, reconstruction enters through the Lipschitz hypothesis in Theorem~\ref{thm:landmark-transfer}, allowing the transfer result to apply to any pipeline for which this deterministic geometric estimate is available. Proving such an estimate for a specific smooth reconstruction family is a natural way to specialize the framework, but it is separate from the probabilistic transfer argument.

The ambient displacement-field result is more general but less complete as an application theorem. To use it in a fully specified stochastic model, one must prove a supremum tail bound for the random field and verify that the perturbed embedded shape remains in the deterministic stability class with high probability. This is a natural next target, but it should not be folded into the present paper without explicit random-field assumptions.

For SAR, the present paper should be read as a geometric stability component rather than an image-domain theorem. Cubical-complex, binary-mask, grayscale-image, and range-Doppler versions of ECT stability require their own deterministic stability theory. Those cases are left for later work.

\bibliographystyle{plainnat}
\bibliography{references}

\end{document}